\theoremstyle{plain}
\newtheorem{thm}{Theorem}[section]
\newtheorem{theorem}[thm]{Theorem}
\newtheorem{lemma}[thm]{Lemma}
\newtheorem{conjecture}[thm]{Conjecture}
\theoremstyle{definition}
\newtheorem{definition}[thm]{Definition}
\newtheorem{remark}[thm]{Remark}
\newtheorem{thevarthm}[thm]{\varthmname}
\newenvironment{varthm*}[1]{\trivlist\item[]{\bf #1.}\it}{\endtrivlist}
\renewcommand\ge{\geqslant}
\renewcommand\geq{\geqslant}
\renewcommand\le{\leqslant}
\renewcommand\leq{\leqslant}
\newcommand\be{\begin{eqnarray*}}
\newcommand\ee{\end{eqnarray*}}
\newcommand\Q{\mathbb Q}
\newcommand\Z{\mathbb Z}
\newcommand\K{\mathbb K}
\renewcommand\P{\mathbb P}
\newcommand\calo{{\mathcal O}}
\newcommand\cali{{\mathcal I}}
\newcommand\calf{{\mathcal F}}
\newcommand\call{{\mathcal L}}
\newcommand\calm{{\Lambda}}
\newcommand\newop[2]{\def#1{\mathop{\rm #2}\nolimits}}
\newop\log{log}
\newop\ord{ord}
\newop\Gal{Gal}
\newop\SL{SL}
\newop\Bl{Bl}
\newop\mult{mult}
\newop\mass{mass}
\newop\div{div}
\newop\codim{codim}
\newop\edim{edim}
\newop\vdim{vdim}
\newop\sing{sing}
\newop\Zeroes{Zeroes}
\newop\Tr{Tr}
\newop\Res{Res}
\newop\usc{usc}
\newcommand\eqnref[1]{(\ref{#1})}
\newop\HF{HF}
\newop\HP{HP}
\newcommand\eps{\varepsilon}
\renewcommand\chi{\HP}
\newcommand\lra{\longrightarrow}
\def\keywordname{{\bfseries Keywords}}%
\def\keywords#1{\par\addvspace\medskipamount{\rightskip=0pt plus1cm
   \def\and{\ifhmode\unskip\nobreak\fi\ $\cdot$
   }\noindent\keywordname\enspace\ignorespaces#1\par}}
\def\subclassname{{\bfseries Mathematics Subject Classification
(2010)}\enspace}
\def\subclass#1{\par\addvspace\medskipamount{\rightskip=0pt plus1cm
   \def\and{\ifhmode\unskip\nobreak\fi\ $\cdot$
   }\noindent\subclassname\ignorespaces#1\par}}
\definecolor{uuuuuu}{rgb}{0,0,1}
\definecolor{qqqqff}{rgb}{0,0,1}
\definecolor{xdxdff}{rgb}{0,0,1}
\def\endproof{\hspace*{\fill}\endproofsymbol\endtrivlist}
\def\endproofsymbol{\frame{\rule[0pt]{0pt}{6pt}\rule[0pt]{6pt}{0pt}}}
\newcommand\parag[1]{\paragraph*{#1.}\hskip-0.5em}
\begin{document}

\author{Thomas Bauer, Sandra Di Rocco, David Schmitz, Tomasz Szemberg, Justyna Szpond}
\title{On the postulation of lines and a fat line
   %An effective Serre-type vanishing for generic lines and a fat line
}
\date{\today}
\maketitle
\thispagestyle{empty}

\begin{abstract}
   In the present note we show that the union of $r$ general lines
   and one fat line in $\P^3$ imposes independent conditions
   on forms of sufficiently high degree~$d$,
   where the bound is independent of the number of lines.
   This extends former results
   of Hartshorne and Hirschowitz on unions of general lines,
   and of Aladpoosh on unions of general lines and one double line.
\keywords{postulation problems, fat flats, Hilbert functions, Serre vanishing}
\subclass{14C20, 14F17, 13D40, 14N05}
\end{abstract}

%*****************************************************************************

\section{Introduction}
   Let $X\subset\P^n$ be a closed subscheme. The Hilbert function of $X$
   encodes a number of properties of $X$ and has been classically an object
   of vivid research in algebraic geometry and commutative algebra. We recall
   first the definition.

\begin{definition}[Hilbert function]
   The Hilbert function of a scheme $X\subset\P^n(\K)$ is
   $$\HF_X:\Z\ni d\to \dim_{\K}[S(X)]_d\in\Z,$$
   where $S(X)$ denotes the graded homogeneous coordinate ring of $X$.
\end{definition}
   It is well known that the Hilbert function becomes eventually (i.e., for large $d$)
   a polynomial. We denote this Hilbert polynomial of $X$ by $\chi_X$.
   Whereas the Hilbert polynomial can be (in principle) computed algorithmically,
   the Hilbert function is more difficult to compute.
   It may happen that the Hilbert function is equal to the Hilbert polynomial,
   for example for $\P^n$ we have $\HF_{\P^n}(d)=\chi_{\P^n}(d)$ for
   all $d\geq 0$, but this behaviour is rare. The next simplest
   behaviour occurs for subschemes with bipolynomial Hilbert function.
\begin{definition}[Bipolynomial Hilbert function]
   Following \cite{CCG10} we say that $X$ has
   a \emph{bipolynomial Hilbert function} if
   \begin{equation}\label{eq:HF bipoly}
      \HF_X(d)=\min\left\{\chi_{\P^n}(d),\chi_X(d)\right\}
   \end{equation}
   for all $d\geq 1$.
\end{definition}
   In other words, $X$ has a bipolynomial Hilbert function if
   $X\subset\P^n$ imposes the \emph{expected} number of conditions on forms of arbitrary
   degree $d\geq 1$.
   It is definitional that if $X$ consists of
   $q$ general points in $\P^n$, then its Hilbert function
   is bipolynomial. An analogous result for $X$ consisting
   of $r$ general lines in $\P^n$ with $n\geq 3$ has been
   proved by Hartshorne and Hirschowitz in \cite[Theorem 0.1]{HarHir81}.
   Recently Carlini, Catalisano
   and Geramita \cite{CCG13} showed that if $X$ consists of $r$ general lines and one general fat point,
   then, up to a short list of exceptions in $\P^3$, the Hilbert function of $X$
   is bipolynomial, see also \cite{AlaBal14} and \cite{Bal16}.

   Aladpoosh in \cite{Ala16} has proved recently that also a scheme consisting of
   $r$ general lines and one double line has (with the exception of one double line and two simple lines in $\P^4$ imposing
   dependent conditions on forms of degree $2$) a bipolynomial
   Hilbert function. She also conjectured \cite[Conjecture 1.2]{Ala16} that the same holds true
   for $r$ general lines and one fat flat of arbitrary dimension. In the present note we provide evidence
   supporting this conjecture for a fat line of arbitrary multiplicity $m$. Our main result is the
   following.
\begin{varthm*}{Main Theorem}
   Let $m\geq 1$ be a fixed integer.
   Then for $d\ge d_0(m):=3\binom{m+1}{3}$,
   the Hilbert function of a subscheme $X\subset\P^3$
   consisting of $r\geq 0$ general lines and one line of multiplicity $m$
   (i.e. defined by the $m$-th power of the ideal of a line) satisfies
   formula \eqnref{eq:HF bipoly}.
\end{varthm*}
   In other words, a general fat line and an arbitrary number $r$ of general lines
   with multiplicity $1$ impose independent conditions on forms of degree $d\geq d_0(m)$
   (see Theorem~\ref{thm:main rank}).

   It follows from the Serre Vanishing \cite[Theorem 1.2.6]{PAG} that for any subscheme
   $X\subset \P^n$, there exists a bound $d_0(X)$ such that $X$ imposes independent
   conditions on forms of degree $d\geq d_0(X)$. The point here is that
   we obtain an explicit bound that depends only on the multiplicity of the fat
   line but is independent of the number of reduced lines.

   We will set up the proof in a way which employs the general strategy of
   Hartshorne and Hirschowitz \cite{HarHir81}
   and Carlini, Catalisano and Geramita \cite{CCG13}.
   This amounts to work inductively by constructing a
   a suitable sequence of
   generic subschemes $Z_0,Z_1,\dots$,
   along with
   suitable
   specializations $Z_i'$ of $Z_i$.
   The starting scheme $Z_0$ consists of the lines in the theorem
   plus a number of generic points.
   The essential difficulty in this strategy lies in the
   question which kinds of intermediate schemes $Z_i$ to consider
   and which specializations $Z_i'$ to chose, in order for
   an inductive procedure to work.
   In our approach
   this is achieved by using
   intermediate schemes that
   contain,
   apart from disjoint lines and points, also
   crosses and so-called
   zig-zags (see Def.~\ref{def:zz}).

%*****************************************************************************

\section{Preliminaries and auxiliary results}
   We begin by recalling a formula for the number $c(n,m,d)$ of conditions which
   vanishing to order $m$ along a line in $\P^n$ imposes on forms
   of degree $d\ge m$:
   \begin{equation}\label{eq:conditions of fat line in Pn}
      c(n,m,d)=\frac{m(nd+2n+m-mn-1)}{n(n-1)}\binom{n+m-2}{m}.
   \end{equation}
   For a proof see e.g. \cite[Lemma 2.1]{DHST14}. Note that
   $$c(n,1,d)=d+1$$
   for all $n\geq 1$.
%\meta{Why not $n\ge 2$?}

   In the next Lemma we present a useful formula relating some of numbers $c(n,m,d)$.

\begin{lemma}\label{lem:formula for cnmd}
   For all positive integers $n,m,d$ we have
   $$c(n,m,d)=c(n,m-1,d-1)+c(n-1,m,d).$$
\end{lemma}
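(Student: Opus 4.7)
My plan is to prove this identity by elementary combinatorics. The first step is to derive an alternative, summation form of $c(n,m,d)$ by direct monomial counting. Choosing coordinates on $\P^n$ so that the line is cut out by $x_2, \ldots, x_n$, the ideal $I^m$ is spanned in degree $d$ by the monomials $x^\alpha$ with $\alpha_2 + \cdots + \alpha_n \ge m$. Counting the complementary monomials by the value of $k := \alpha_2 + \cdots + \alpha_n$ (which ranges over $0, \ldots, m-1$), one obtains
\begin{equation*}
c(n,m,d) = \sum_{k=0}^{m-1}(d-k+1)\binom{k+n-2}{n-2}.
\end{equation*}

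The second step is to apply Pascal's identity $\binom{k+n-2}{n-2} = \binom{k+n-3}{n-3} + \binom{k+n-3}{n-2}$ to split this sum. The piece involving $\binom{k+n-3}{n-3}$ is immediately recognized as $c(n-1,m,d)$ by the same formula applied in $\P^{n-1}$. In the remaining piece $\sum_{k=0}^{m-1}(d-k+1)\binom{k+n-3}{n-2}$, the $k=0$ summand vanishes; after the index shift $k = j+1$ it becomes $\sum_{j=0}^{m-2}(d-j)\binom{j+n-2}{n-2}$, which is exactly $c(n,m-1,d-1)$.

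The main obstacle in this plan is reconciling the combinatorial sum with the closed form in \eqref{eq:conditions of fat line in Pn}; this is routine and can be handled by a short induction on $m$. A backup plan is to avoid the combinatorial rewriting altogether, substituting \eqref{eq:conditions of fat line in Pn} directly into both sides of the claimed identity, splitting the binomial $\binom{n+m-2}{m}$ via Pascal, and using the elementary relation $(n-2)\binom{n+m-3}{m-1} = m\binom{n+m-3}{m}$ to reduce the statement to a polynomial identity in $n, m, d$ that expands and cancels cleanly.
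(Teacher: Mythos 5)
Your proposal is correct, and your primary argument is genuinely different from the paper's: the paper disposes of this lemma with the single line ``this is a straightforward computation,'' i.e.\ exactly your backup plan of substituting the closed form \eqref{eq:conditions of fat line in Pn} into both sides and cancelling. Your main route instead rewrites $c(n,m,d)$ as the monomial count $\sum_{k=0}^{m-1}(d-k+1)\binom{k+n-2}{n-2}$ and splits it with Pascal's rule; I checked the two resulting pieces and they do reindex to $c(n-1,m,d)$ and $c(n,m-1,d-1)$ exactly as you claim (the $k=0$ term of the second piece vanishes because $\binom{n-3}{n-2}=0$). What this buys is an explanation of \emph{why} the identity holds: your decomposition is the monomial-level shadow of restricting to a hyperplane containing the line, with the $\binom{k+n-3}{n-3}$ part giving the trace conditions and the shifted part giving the residual conditions --- which is precisely the geometric mechanism (residual exact sequence) driving the rest of the paper. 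The one genuine obligation in your route, which the paper's route avoids, is verifying that the summation form agrees with the closed form \eqref{eq:conditions of fat line in Pn}; you flag this yourself, and it is indeed a routine induction on $m$ (the increment $c(n,m,d)-c(n,m-1,d)$ of the closed form equals $(d-m+2)\binom{n+m-3}{n-2}$). A very minor caveat: both your argument and the closed form implicitly assume $n\ge 2$ (for $n=1$ the formula degenerates to $0/0$), but the lemma is only ever invoked in the paper for $n\ge 3$, so this costs nothing.
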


\proof
   This is a straightforward computation.
\endproof

   In $\P^3$ the formula \eqref{eq:conditions of fat line in Pn} reduces to
   $$c(d,m)=c(3,m,d)=\frac16m(m+1)(3d+5-2m).$$

   Our approach to the Main Theorem uses the specialization method.
   This employs the Semi-Continuity Theorem
   \cite[Theorem III.12.8]{Har77}
   in the following way:

\begin{trivlist}\item[]\it
   Let $f:X\to B$ be a projective morphism of noetherian schemes
   and let $\calf$ be a coherent sheaf on $X$, flat over $B$.
   The vanishing $h^0(X_b,\calf_b)=0$ for some $b$ implies then the vanishing
   $h^0(X_{b'},\calf_{b'})=0$ for all $b'$ in a neighborhood of $b$.
\end{trivlist}
   In our situation, this means concretely that
   if $h^0(\P^n,\calo_{\P^n}(d)\otimes\cali_{Z_b})=0$
   for a (special) subscheme $Z_b$, then
   $h^0(\P^n,\calo_{\P^n}(d)\otimes\cali_{Z_{b'}})=0$
   for a (general) subscheme $Z_{b'}$ such that $Z_b$ and
   $Z_{b'}$ vary in a flat family over $B$.

\medskip
   We are going to use and generalize the notion of \emph{sundials}
   following the ideas of Carlini, Catalisano and Geramita, see
   \cite[Section 2]{CCG13} for definitions and motivations.

\begin{definition}[Sundials and crosses]
   A \emph{sundial} in $\P^n$ is the limiting subscheme obtained
   by a collision of two skew lines (hence spanning a $\P^3\subset\P^n$).
   It has a nonreduced structure in the collision point which can be thought of
   as a vector generating together with the plane spanned by the two
   intersecting lines the $\P^3$ mentioned above.

   A union of two lines in $\P^n$ intersecting in a single point is called a \emph{cross}.
   A cross is hence a sundial with the reduced structure.
\end{definition}

   Carlini, Catalisano and Geramita proved in \cite[Lemma 2.5]{CCG10} that there exists a flat family
   $g:W\to B$ of schemes in $\P^n$, with $n\geq  3$ such that a general member $W_{b'}\subset W$
   is a union of two disjoint lines, whereas the special fiber $W_b$ is a sundial.

   It is a crucial point in our proof of the Main Theorem to
   use a generalization of this idea, which uses \emph{zig-zags}
   in the following sense:

\begin{definition}[Zig-zag]\label{def:zz}
   A \emph{zig-zag} of length $z$ is the limiting subscheme obtained
   by a collision of an ordered set of $z$ general lines $L_1,L_2,\ldots,L_z$ in such a way,
   that the line $L_1$ intersects $L_2$, the line $L_2$ intersects $L_1$ and $L_3$
   and the intersection points are distinct, $L_3$ intersects $L_2$ and $L_4$ and
   the intersection points are again distinct, and so on, finally $L_{z-1}$ intersects
   $L_{z-2}$ and $L_z$ in two distinct points. The structure in the intersection
   points is the same as the structure of a sundial in the intersection point of its lines.
%   These points are called \emph{singular points} of the zig-zag.
   A zig-zag of length $z$
   has thus $(z-1)$ singular points.

   A \emph{reduced zig-zag} is a zig-zag with reduced structure, i.e., no embedded points.
\end{definition}

   Figure \ref{fig:zz7} shows
   a zig-zag of length $7$. Note that the lines in the figure are all skew,
   there are no other intersection points but those indicated in this figure.
   The intersection points are embedded points with the
   structure of a scheme of length $2$ not contained in the plane generated
   by the intersecting lines. Note that a sundial is just a zig-zag of length $2$.
   A cross is a reduced zig-zag of length $2$.

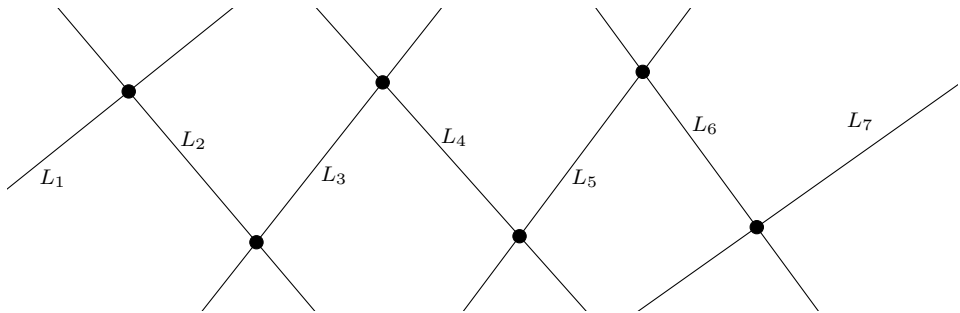
\begin{figure}[h]
\centering
\definecolor{qqqqff}{rgb}{0,0,0}
\begin{tikzpicture}[line cap=round,line join=round,>=triangle 45,x=1.0cm,y=1.0cm]
\clip(-1.5,-2.5) rectangle (11,1.5);
\draw [domain=-4.3:18.7] plot(\x,{(--0.872-2.*\x)/1.68});
\draw [domain=-4.3:18.7] plot(\x,{(-6.4296--2.12*\x)/1.66});
\draw [domain=-4.3:18.7] plot(\x,{(--7.9536-2.04*\x)/1.8});
\draw [domain=-4.3:18.7] plot(\x,{(-13.8856--2.18*\x)/1.62});
\draw [domain=-4.3:18.7] plot(\x,{(--15.1216-2.06*\x)/1.5});
\draw [domain=-4.3:18.7] plot(\x,{(--0.702--1.78*\x)/2.2});
\draw [domain=-4.3:18.7] plot(\x,{(-25.8--2.5*\x)/3.5});
\begin{scriptsize}
\draw[color=qqqqff] (-0.9,-0.75) node {$L_1$};
\draw [fill=qqqqff] (0.1,0.4) circle (2.5pt);
\draw[color=qqqqff] (0.95,-0.25) node {$L_2$};
\draw [fill=qqqqff] (1.78,-1.6) circle (2.5pt);
\draw[color=qqqqff] (2.8,-0.71) node {$L_3$};
\draw [fill=qqqqff] (3.44,0.52) circle (2.5pt);
\draw[color=qqqqff] (4.38,-0.21) node {$L_4$};
\draw [fill=qqqqff] (5.24,-1.52) circle (2.5pt);
\draw[color=qqqqff] (6.1,-0.75) node {$L_5$};
\draw [fill=qqqqff] (6.86,0.66) circle (2.5pt);
\draw[color=qqqqff] (7.68,-0.05) node {$L_6$};
\draw [fill=qqqqff] (8.36,-1.4) circle (2.5pt);
\draw[color=qqqqff] (9.72,0.01) node {$L_7$};
\end{scriptsize}
\end{tikzpicture}
\caption{A zig-zag of length $7$}
\label{fig:zz7}
\end{figure}

\begin{lemma}\label{lemma: zz}
   For an integer $z\geq 2$, there exists a flat family $\{X_{\lambda}\}$ of schemes in $\P^n$, with $n\geq 3$ such that a general
   member of $\{X_{\lambda}\}$ is a union of $z$ disjoint lines and the special fiber is a zig-zag of length $z$.
\end{lemma}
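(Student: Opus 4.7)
The plan is to construct the required family directly by superimposing $z-1$ independent sundial specializations, one at each prescribed crossing point $p_2,\ldots,p_z$. The case $z=2$ is exactly \cite[Lemma 2.5]{CCG10}; for $z>2$ the idea is to combine $z-1$ copies of that lemma, one for each adjacent pair of lines, using that the crossings live at pairwise distinct points of $\P^n$ so can be degenerated independently.

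Concretely, fix a reduced zig-zag $L_1\cup\cdots\cup L_z$ in general position in $\P^n$ (so $L_i$ meets $L_{i+1}$ in a single point $p_{i+1}$, the intersection points are pairwise distinct, and no further incidences occur). For each $i=2,\ldots,z$, apply \cite[Lemma 2.5]{CCG10} to the pair $(L_{i-1},L_i)$ with $L_{i-1}$ held fixed, to obtain a one-parameter family $L_i(t_i)$ such that $L_i(0)=L_i$ carries the sundial embedded point at $p_i$ pointing in a general direction out of $\langle L_{i-1},L_i\rangle$, while $L_i(t_i)$ is skew to $L_{i-1}$ for $t_i\neq 0$. Since the parameters $t_2,\ldots,t_z$ can be taken to be independent, this defines a multi-parameter family $\{L_1\cup L_2(t_2)\cup\cdots\cup L_z(t_z)\}$ over $\mathbb{A}^{z-1}$ whose central fiber is exactly the zig-zag of length $z$ and whose generic fiber consists of $z$ lines in general position. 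Restricting to any smooth curve through the origin---e.g.\ the diagonal $t_2=\cdots=t_z=\lambda$---yields the one-parameter family demanded by the lemma.

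Flatness is the key technical point, and it is checked locally around each singular point $p_i$ of the central fiber. In a formal neighborhood of $p_i$ only the $i$-th sundial subfamily contributes nontrivially, the other components being disjoint from a small neighborhood of $p_i$, so local flatness at $p_i$ is inherited directly from \cite[Lemma 2.5]{CCG10}; away from the singular points the family is flat since only disjoint lines are being moved. Consistency is reflected in the Hilbert polynomial being constantly $zd+z$ along the family, a value realized both by $z$ disjoint lines on the generic fiber and by the zig-zag on the central fiber, since a sundial has the same Hilbert polynomial as two skew lines.

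The main obstacle is to ensure that the independent sundial specializations do not interact in a way that spoils either flatness or the prescribed embedded structures. This is resolved by the distinctness of the crossing points, which localizes the degenerations to disjoint loci of $\P^n$, and by the genericity of the embedding directions chosen in each sundial, which prevents accidental incidences along the family for $t$ away from the origin and guarantees that the generic fiber consists of $z$ lines in general position.
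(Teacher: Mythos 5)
Your argument is correct and is essentially the approach the paper intends: the paper's own proof is just the one-line remark that the lemma follows by ``a generalization of the argument in [CCG10, Lemma~2.5]'', and your proposal spells out exactly that generalization (independent sundial degenerations localized at the distinct crossing points, with flatness checked locally and confirmed by the constant Hilbert polynomial $zd+z$). You in fact provide more detail than the paper does.
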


\proof
   The proof consists in a generalization of the argument
   in \cite[Lemma 2.5]{CCG10}.
\endproof

   Zig-zags are useful in our approach because of the following fact.

\begin{lemma}\label{lem:residual sundial}
   Let $S$ be a zig-zag of length $z$ in $\P^3$ formed by lines $L_1,\ldots,L_z$.
   Let $Q$ be a smooth quadric in $\P^3$ such that all singular points of $S$
   lie on $Q$ but none of the lines in $S$ is contained
   in $Q$. Then the colon ideal
   $$J=I_S:I_Q$$
   defines the reduced zig-zag $V(J)=L_1\cup\ldots\cup L_z$.
\end{lemma}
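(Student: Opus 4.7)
The plan is to establish the equality of ideals $I_S : I_Q = I_{L_1 \cup \cdots \cup L_z}$ (whence $V(J)$ is the reduced zig-zag) by verifying both inclusions; the nontrivial content is a purely local computation at each singular point $p_j = L_j \cap L_{j+1}$ of $S$.

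The inclusion $I_S : I_Q \subseteq I_{L_1 \cup \cdots \cup L_z}$ is formal. If $f \cdot I_Q \subseteq I_S$, then $f \cdot I_Q \subseteq I_{L_i}$ for each $i$, because $I_S$ is contained in the prime ideal $I_{L_i}$. The hypothesis that $Q$ contains none of the $L_i$ means $I_Q \not\subseteq I_{L_i}$, and primeness of $I_{L_i}$ forces $f \in I_{L_i}$; intersecting over $i$ gives $f \in I_{L_1 \cup \cdots \cup L_z}$.

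For the reverse inclusion, I would prove $I_{L_1 \cup \cdots \cup L_z} \cdot I_Q \subseteq I_S$ by a local check. Off the finite set $\{p_1, \ldots, p_{z-1}\}$ the scheme $S$ is reduced and coincides with the reduced zig-zag, so there is nothing to verify. At a singular point $p_j$, I would choose affine local coordinates $(x,y,z)$ centered at $p_j$ in which $L_j = V(y,z)$ and $L_{j+1} = V(x,z)$; after a further linear substitution in $x$ and $y$ (which preserves both lines and is possible precisely because the embedded direction of the sundial at $p_j$ is transverse to the plane $V(z) = \langle L_j, L_{j+1}\rangle$), the local ideal of $S$ takes the standard form
\[
   I_{S, p_j} = (xy, z) \cap (x, y, z^2) = (xy,\, xz,\, yz,\, z^2),
\]
and $I_{L_j \cup L_{j+1}, p_j} = (xy, z)$. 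A local equation $q$ of $Q$ at $p_j$ lies in $(x,y,z)$ because $p_j \in Q$, and then $xy \cdot q \in (xy) \subseteq I_{S, p_j}$ while $z \cdot q \in z \cdot (x,y,z) = (xz, yz, z^2) \subseteq I_{S, p_j}$, yielding $(xy, z) \cdot (q) \subseteq I_{S, p_j}$ as needed.

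The main obstacle is the local bookkeeping at each $p_j$: pinning down the explicit form of the sundial ideal and noticing that no further transversality hypothesis between $Q$ and the embedded direction is required beyond $p_j \in Q$ and the given non-containment of the $L_i$. Once this is isolated, the colon computation is mechanical and globalizes to $I_S : I_Q = I_{L_1 \cup \cdots \cup L_z}$, so $V(J) = L_1 \cup \cdots \cup L_z$ as a reduced subscheme.
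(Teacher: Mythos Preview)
Your argument is correct. The paper in fact states this lemma without proof, so there is no authors' argument to compare against; your explicit verification supplies what the paper leaves to the reader. The two inclusions are handled cleanly: the containment $I_S:I_Q\subseteq I_{L_1\cup\cdots\cup L_z}$ follows from primeness of each $I_{L_i}$ together with $L_i\not\subseteq Q$, and the reverse inclusion reduces to the local computation at each embedded point $p_j$, where your identification of the sundial ideal as $(xy,z)\cap(x,y,z^2)=(xy,xz,yz,z^2)$ and the observation $z\cdot(x,y,z)=(xz,yz,z^2)\subseteq I_{S,p_j}$ are exactly what is needed. One small point worth making explicit is that, since $I_Q$ is principal, the colon operation commutes with localization, so the local checks indeed globalize to the claimed equality of ideals.
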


   Apart from semicontinuity, the residual exact sequence and the Castelnuovo inequality
   are key ingredients in the proof. We discuss them now.

\begin{definition}[Trace and residual scheme]
   Let $Y$ be a divisor of degree $e$ in $\P^n$ and let $Z\subset \P^n$ be a closed subscheme.
   Then the subscheme $Z''=\Tr_Y(Z)$ defined in $Y$ by the ideal
   $$I_{Z''/Y}=\left(I_Y+I_Z\right)/I_Y\subset\calo_Y$$
   is the \emph{trace of $Z$ on $Y$}.

   The colon ideal $I_{Z'}=(I_Z:I_Y)\subset \calo_{\P^n}$
   defines $Z'=\Res_Y(Z)$, the \emph{residual scheme of $Z$ with respect to $Y$}.
\end{definition}

   One has the following \emph{residual exact sequence}
   \begin{equation}\label{eq:residual sequence}
      0\lra\cali_{Z'}(-Y)\lra\cali_Z\lra\cali_{Z''/Y}\lra 0
      \,,
   \end{equation}
   where $\cali_W$ is the sheafification of the ideal $I_W$.
   Twisting \eqref{eq:residual sequence} by $\calo_{\P^n}(d)$ we get
   \begin{equation}\label{eq:residual twisted}
   0\lra \calo_{\P^n}(d-e)\otimes\cali_{\Res_Y(Z)}\lra \calo_{\P^n}(d)\otimes\cali_Z\lra \calo_Y(d)\otimes\cali_{\Tr_Y(Z)}\lra 0.
   \end{equation}
   Taking then the long cohomology sequence of \eqref{eq:residual twisted} we obtain
   the following
   statement, which is called the Castelnuovo inequality, see e.g. \cite[Lemma 3.3]{CCG11}.

\begin{lemma}[Castelnuovo inequality]\label{lem:Castelnuovo}
   Let $Y\subset \P^n$ be a divisor of degree $e$ and let $d\geq e$ be an integer.
   Let $Z\subset\P^n$ be a closed subscheme. Then
   \begin{equation}\label{eq:Castelnuovo}
      h^0(\P^n,\calo_{\P^n}(d)\otimes\cali_Z)\leq h^0(\P^n,\calo_{\P^n}(d-e)\otimes\cali_{\Res_Y(Z)})+h^0(Y,\calo_Y(d)\otimes\cali_{\Tr_Y(Z)/Y}).
   \end{equation}
\end{lemma}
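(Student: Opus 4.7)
The strategy is simple and consists entirely of reading off the conclusion from the long exact sequence in cohomology associated to the short exact sequence \eqref{eq:residual twisted}, which has already been set up in the preceding paragraph. First I would apply the global sections functor $H^0(\P^n,-)$ to \eqref{eq:residual twisted}. By left-exactness of $H^0$, this yields the four-term exact sequence
\begin{equation*}
0 \longto H^0\!\bigl(\P^n,\calo_{\P^n}(d-e)\otimes\cali_{\Res_Y(Z)}\bigr)
\longto H^0\!\bigl(\P^n,\calo_{\P^n}(d)\otimes\cali_Z\bigr)
\stackrel{\alpha}{\longto} H^0\!\bigl(\P^n,\calo_Y(d)\otimes\cali_{\Tr_Y(Z)/Y}\bigr).
\end{equation*}
Because $\calo_Y(d)\otimes\cali_{\Tr_Y(Z)/Y}$ is supported on $Y$, its global sections on $\P^n$ are canonically identified with those on $Y$ via the closed immersion $Y\hookrightarrow \P^n$, so the rightmost term equals $h^0(Y,\calo_Y(d)\otimes\cali_{\Tr_Y(Z)/Y})$.

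Then I would extract the inequality \eqref{eq:Castelnuovo} by a standard rank-nullity bound: the middle cohomology group decomposes as the kernel of $\alpha$ (whose dimension equals $h^0(\P^n,\calo_{\P^n}(d-e)\otimes\cali_{\Res_Y(Z)})$ by exactness on the left) plus the image of $\alpha$ (which injects into the target, of dimension $h^0(Y,\calo_Y(d)\otimes\cali_{\Tr_Y(Z)/Y})$). Adding these two dimensions gives exactly the right-hand side of \eqref{eq:Castelnuovo}.

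There is no genuine obstacle in this proof; it is bookkeeping on the long exact sequence. The only preliminary to check is that \eqref{eq:residual twisted} really is short exact, which follows from the fact that $Y$ is Cartier of degree $e$: tensoring the residual sequence \eqref{eq:residual sequence} with the line bundle $\calo_{\P^n}(d)$ preserves exactness and identifies $\cali_{\Res_Y(Z)}(-Y)\otimes\calo_{\P^n}(d)$ with $\calo_{\P^n}(d-e)\otimes\cali_{\Res_Y(Z)}$. The hypothesis $d\ge e$ is not needed for the inequality to hold, but it ensures that the left-hand twist $\calo_{\P^n}(d-e)$ has nonnegative degree, so that the bound is potentially informative.
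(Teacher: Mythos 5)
Your proposal is correct and follows exactly the route the paper itself indicates: the inequality is read off from the long exact cohomology sequence of the twisted residual sequence \eqref{eq:residual twisted}, using left-exactness of $H^0$ and the rank--nullity bound on the connecting map to the trace term. The paper gives no further detail, so your write-up is in fact a slightly more careful version of the same argument.
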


   We call the space $H^0(\P^n,\calo_{\P^n}(d-e)\otimes\cali_{\Res_Y(Z)})$ the \emph{residual linear system} of
   $H^0(\P^n,\calo_{\P^n}(d)\otimes\cali_Z)$ with respect to $Y$ and
   $H^0(Y,\calo_Y(d)\otimes\cali_{\Tr_Y(Z)/Y})$ the \emph{trace linear system} of
   $H^0(\P^n,\calo_{\P^n}(d)\otimes\cali_Z)$ on $Y$.

%*****************************************************************************

\section{Nonspeciality of certain linear series on $\P^1\times\P^1$}

   In the proof of the Main Theorem we will consider trace linear systems
   on a smooth quadric in $\P^3$. This section serves as a preparation
   of relevant results on linear systems on a smooth quadric in $\P^3$
   identified with $\P^1\times\P^1$. Special linear systems with general points of multiplicity at most $3$
   on $\P^1\times\P^1$ have been classified by Lenarcik in \cite{Len11}.
   Here we recall a part of \cite[Theorem 2]{Len11} relevant in our situation.

\begin{lemma}\label{lem:system 1 on P1xP1}
   Let $Z$ be the fat point scheme in $\P^1\times\P^1$ defined by the ideal
   $$I_Z=I(P_1)^2\cap\ldots\cap I(P_p)^2\cap I(Q_1)\cap\ldots\cap I(Q_q),$$
   where $P_1,\ldots,P_p,Q_1,\ldots,Q_q$ are general points in $\P^1\times\P^1$.
   Let $0\leq a\leq b$ be non-negative integers. The linear system
   $$H^0(\P^1\times\P^1, \calo_{\P^1\times\P^1}(a,b)\otimes \cali_Z)$$
   is special if and only if one of the following cases holds
   \begin{itemize}
      \item $a=0$ and $p+2q\leq b$,
      \item $a=2$, $p=0$, $b=q-1$ and $q$ is odd.
   \end{itemize}
\end{lemma}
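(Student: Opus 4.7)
The strategy is to combine a direct dimension count verifying the two exceptional cases with a Horace-style specialization on $\P^1\times\P^1$ to establish nonspeciality in all remaining cases, proceeding by induction on $a+b$.

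First, I would check that the two listed cases are genuinely special. When $a=0$, every section of $\calo(0,b)$ is pulled back from the second factor, so in local affine coordinates $(u,v)$ it depends only on $v$; hence a general simple point imposes a single condition while a general double point imposes only $2$ conditions instead of the full length~$3$ (the $u$-derivative vanishes automatically). Comparing the actual dimension against the virtual dimension $(b+1)-3p-q$ shows that an excess of $h^0$ over the expected value occurs precisely in the range identified by the lemma. For $a=2$, $p=0$ one does an analogous computation: a section of $\calo(2,b)$ is quadratic in the first coordinate, and the parity obstruction emerges because the $q$ simple points must split between two suitably chosen rulings in a way whose combinatorics only fails when $q$ is odd and $b=q-1$.

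For the nonspeciality direction I would induct on $a+b$ using the Castelnuovo inequality (Lemma~\ref{lem:Castelnuovo}) with $Y$ a general ruling of class $(1,0)$. One chooses integers $j\le p$ and $k\le q$ and specializes $j$ of the double points and $k$ of the simple points onto $Y$. The trace on $Y\cong\P^1$ is then a linear system of degree $b$ with $j$ double and $k$ simple conditions, automatically nonspecial once $2j+k\le b+1$. The residual, by the colon-ideal computation, has bidegree $(a-1,b)$ and contains $p-j$ double points together with $q+j-k$ simple points in general position, each specialized double point having dropped to a simple point off $Y$. Choosing $(j,k)$ so that the expected dimensions of trace and residual sum to the expected dimension of the whole system, and applying the inductive hypothesis, one obtains the desired upper bound on $h^0$.

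The main obstacle is the careful bookkeeping of base cases and the correct choice of $(j,k)$: for small values of $a$ the residual at the next step may itself fall into the exceptional strata $a=0$ or $(a,p)=(2,0)$, and one must arrange the specialization so that such bad residuals do not propagate through the induction. What remains is a finite list of low-bidegree base cases in which the Horace reduction is blocked and the system must be analyzed by hand; dispatching this list, together with the verification that the two listed families are the \emph{only} special ones, is the technical heart of Lenarcik's classification and where the work of the proof is concentrated.
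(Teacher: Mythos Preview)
The paper does not prove this lemma at all; it is simply quoted as a special case of Lenarcik's classification \cite[Theorem~2]{Len11} of special systems on $\P^1\times\P^1$ with multiplicities at most~$3$. Your proposal therefore goes beyond what the paper does by outlining the actual argument---specialization onto a ruling together with induction on $a+b$---which is indeed the standard Horace-type technique behind such results and is, in broad strokes, how \cite{Len11} proceeds.

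One caveat on your verification of the second exceptional family: as written in the lemma, $p=0$ means $Z$ consists solely of general \emph{simple} points, and such a scheme always imposes independent conditions on any $|\calo(a,b)|$ with $a,b\ge 0$, so no parity obstruction can arise from ``splitting simple points between rulings'' as you suggest. The genuine $(2,n)$ obstruction in Lenarcik's list comes from \emph{double} points---it is the square of a smooth $(1,k)$-curve through them that forces $h^0>0$ when the virtual dimension is zero---so either the paper has transposed $p$ and $q$ in that bullet, or your geometric explanation there needs to be reworked. Apart from this point, your sketch is a reasonable roadmap, and you honestly flag that the finite list of low-bidegree base cases is where the real labor of Lenarcik's proof lies.
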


   Using this result, we prove now
   an auxiliary postulation statement
   for higher multiplicities:

\begin{lemma}\label{lem:system 2 on P1xP1}
   Given an integer $m\geq 2$ let $k$ be an integer with $k\geq\binom{m+1}{3}$.
   Then $2$ general points $P_1$, $P_2$ taken with multiplicity $m$ impose
   independent conditions on linear systems on $\P^1\times\P^1$ of bidegree $(a,b)$
   if $a\leq b$ and $a\geq k-1$ and $b\geq 3k$.
\end{lemma}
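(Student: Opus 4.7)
\emph{Plan.} I would prove this by induction on $m$, with the base case $m=2$ reduced to Lemma~\ref{lem:system 1 on P1xP1} and the inductive step driven by a single application of the Castelnuovo inequality (Lemma~\ref{lem:Castelnuovo}) on $\P^1\times\P^1$ after specializing $P_2$ onto a ruling through $P_1$.

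For the \textbf{base case} $m=2$, I would apply Lemma~\ref{lem:system 1 on P1xP1} with $p=2$, $q=0$: the second listed exception does not occur (it requires $p=0$), and the first requires $a=0$, which is excluded by the hypothesis $a\ge k-1$ as soon as $k\ge 2$. Hence in the stated range the system is nonspecial, meaning that $2P_1+2P_2$ imposes independent conditions.

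For the \textbf{inductive step}, I specialize $P_2$ to lie on the unique ruling $Y$ of bidegree $(1,0)$ through $P_1$, with $P_2\neq P_1$. By upper semicontinuity of $h^0$ it is enough to bound the cohomology for this specialized configuration. Applying Lemma~\ref{lem:Castelnuovo} with divisor $Y$ gives
\[
   h^0\bigl(\cali_Z(a,b)\bigr)\le h^0\bigl(\cali_{\Res_Y Z}(a-1,b)\bigr) + h^0\bigl(Y,\calo_Y(a,b)\otimes\cali_{\Tr_Y Z/Y}\bigr).
\]
Here $\Res_Y Z=(m-1)P_1+(m-1)P_2$ and $\Tr_Y Z=mP_1+mP_2$ on $Y\cong\P^1$ with $\calo_Y(a,b)\cong\calo_{\P^1}(b)$. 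The trace term contributes $\max(0,b+1-2m)=b+1-2m$, using $b\ge 3k\ge 3\binom{m+1}{3}\ge 2m-1$ for $m\ge 2$. For the residual term I would invoke the inductive hypothesis with $m-1$ and $k':=k-1$: the three admissibility conditions $k'\ge\binom{m}{3}$, $(a-1)\ge k'-1$, and $b\ge 3k'$ all follow from our current hypotheses together with the identity $\binom{m+1}{3}-\binom{m}{3}=\binom{m}{2}\ge 1$.

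Adding the two contributions,
\[
   a(b+1)-m(m-1) + (b+1-2m) \;=\; (a+1)(b+1)-m(m+1),
\]
which is exactly the expected codimension of $Z$ in $H^0(\calo(a,b))$. Combined with the trivial lower bound $h^0\ge$ expected, this forces equality, hence $h^1(\cali_Z(a,b))=0$, i.e., independent conditions. The \textbf{main obstacle} is not the arithmetic of Castelnuovo itself (which is designed to telescope exactly) but the bookkeeping that the inductive parameter $k-1$ remains admissible at every step and that the base case $m=2$ genuinely falls within Lemma~\ref{lem:system 1 on P1xP1}.
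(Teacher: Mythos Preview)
Your inductive step has a genuine gap. After you specialize $P_2$ onto the $(1,0)$-ruling $Y$ through $P_1$, both points cease to be in general position, and they remain collinear on $Y$ in the residual scheme $(m-1)P_1+(m-1)P_2$. The inductive hypothesis, however, only asserts that two \emph{general} points of multiplicity $m-1$ impose independent conditions on the system of bidegree $(a-1,b)$; it says nothing about two points lying on a common ruling. Semicontinuity goes the wrong way here: $h^0$ can only increase under specialization, so you cannot bound $h^0\bigl(\cali_{\Res_Y Z}(a-1,b)\bigr)$ from above by the generic value supplied by the hypothesis. The arithmetic telescopes nicely, but the logical link that would let you plug in $a(b+1)-m(m-1)$ for the residual term is missing.

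There are two natural repairs. One is to abandon the induction on $m$ altogether and iterate the Castelnuovo step $m$ times with the \emph{same} ruling $Y$; the trace contributions $b+1-2(m-j+1)$ for $j=1,\dots,m$ together with the final residual $(a-m+1)(b+1)$ sum to exactly $(a+1)(b+1)-m(m+1)$, and no hypothesis on smaller $m$ is ever invoked. The other is to strengthen the statement being proved so that it covers $P_1,P_2$ on a common $(1,0)$-ruling. The paper sidesteps the issue by a different device: it keeps $P_1,P_2$ general throughout, introduces $s=(a+1)(b+1)-2\binom{m+1}{2}$ auxiliary simple points, passes a smooth $(1,1)$-curve $C$ through $P_1,P_2$, and specializes enough of the \emph{auxiliary} points onto $C$ to force $C$ into the base locus by a B\'ezout count. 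Peeling off $C$ drops the bidegree to $(a-1,b-1)$ and the multiplicities at $P_1,P_2$ to $m-1$; since $P_1,P_2$ were never moved, the inductive hypothesis applies directly to the residual.
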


\proof
   For $m=2$ the assertion for arbitrary $k\geq\binom{m+1}{3}$ follows from Lemma \ref{lem:system 1 on P1xP1}.
   We proceed by induction on $m$ and $k$. Let $m$ and $k\geq\binom{m+1}{3}$ be fixed
   and assume that the assertion holds for all $m'<m$ and $k'$.
   Let $s=(a+1)(b+1)-2\binom{m+1}{2}$ and let $Q_1,\ldots,Q_s$ be $s$ general
   points in $\P^1\times\P^1$. It is enough to show that there is
   no divisor of bidegree $(a,b)$ which passes with multiplicity $m$
   through the points $P_1$, $P_2$ and passes through $Q_1,\ldots,Q_s$.
   It suffices to prove this claim for a particular position of points
   $Q_1,\ldots,Q_s$.

   To this end let $C$ be a smooth curve of bidegree $(1,1)$ passing through $P_1$ and $P_2$.
   Thus $C$ is a smooth rational curve. Let $t=a+b-2m+1$. By above assumptions this
   is a non-negative integer. We specialize now the points $Q_1,\ldots,Q_t$
   onto the curve $C$ leaving the points $Q_{t+1},\ldots,Q_s$ as general
   points on $\P^1\times\P^1$, so that they do not lie on $C$ in particular.
   Assume to the contrary that there is a divisor $\Gamma$ such that
   $\mult_{P_i}\Gamma\geq m$ for $i=1,2$ and $\mult_{Q_j}\Gamma\geq 1$ for $j=1,\ldots,s$.
   Then $C$ must be a component of $\Gamma$, because $(\Gamma\cdot C)=a+b$ but the trace
   of $\Gamma$ on $C$ has at least $2$ points of multiplicity $m$ and another $t$ points
   with $2m+t= a+b+1$. The residual divisor $\Gamma'=\Gamma-C$ has bidegree $(a-1,b-1)$
   and passes through the points $P_1$ and $P_2$ with multiplicity $m-1$
   and also passes through the points $Q_{t+1},\ldots,Q_{s}$. Since $s-t=ab-2\binom{m}{2}$,
   the existence of $\Gamma'$ is excluded by our induction assumption.

   Thus we are done with the proof of the Lemma.
\endproof

%*****************************************************************************

\section{The proof of the Main Theorem}

   In this section we will prove the Main Theorem, which
   is equivalent to the following statement.

\begin{theorem}[Maximal rank property]\label{thm:main rank}
   For a subscheme $W\subset\P^n$ consisting of a general line of multiplicity $m$
   and an arbitrary number $r$ of general lines, for all $d\geq d_0(m)=3\binom{m+1}{3}$,
   the restriction map
   $$H^0(\P^n,\calo_{\P^n}(d))\to H^0(W,\calo_W(d))$$
   has maximal rank.
\end{theorem}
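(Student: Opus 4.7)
The plan is to prove the theorem by induction on the multiplicity $m$, with base case $m=1$ being Hartshorne--Hirschowitz, and the inductive step a Castelnuovo-type specialization onto a smooth quadric $Q\subset\P^3$. First I carry out the standard reduction to a vanishing statement: maximal rank of the restriction map is equivalent to the vanishing of $h^0$ on a suitable enlargement by generic points. By monotonicity in $r$ it suffices to treat the critical regime, and adjoining $p=\binom{d+3}{3}-c(d,m)-r(d+1)\ge 0$ generic points $P_1,\ldots,P_p$ reduces the theorem to
\[
   h^0\bigl(\P^3,\cali_{W\cup\{P_1,\ldots,P_p\}}(d)\bigr)=0.
\]

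To prove this vanishing I apply the Castelnuovo inequality (Lemma~\ref{lem:Castelnuovo}) with $Y=Q$ and combine two complementary geometries for $Q$. Taking $Q$ general (so $M\not\subset Q$) puts the two fat points $mR_1+mR_2$, where $\{R_1,R_2\}=M\cap Q$, into the trace linear system on $Q\cong\P^1\times\P^1$; this is precisely the configuration analyzed by Lemma~\ref{lem:system 2 on P1xP1}, and the threshold $d\ge d_0(m)=3\binom{m+1}{3}$ in the theorem is exactly the hypothesis of that lemma with $k=\binom{m+1}{3}$. This choice leaves $M^m$ unchanged in the residual and feeds an induction lowering $d$ by $2$. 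Alternatively, taking $Q\supset M$ replaces $M^m$ by $M^{m-1}$ in the residual, feeding the induction on $m$, and places $mM$ (as a divisor of bidegree $(m,0)$) into the trace. In both cases the numerical identities of Lemma~\ref{lem:formula for cnmd}, namely $c(d,m)=c(d-2,m-1)+m(d+1)$ and $c(d,m)=c(d-2,m)+m(m+1)$, together with $\binom{d+3}{3}=\binom{d+1}{3}+(d+1)^2$, guarantee that the expected conditions on residual and trace sum to $\binom{d+3}{3}$; tuning the number $p'$ of auxiliary points specialized onto $Q$ then makes each piece hit its target dimension.

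For values of $r$ in which plain point specialization cannot satisfy the constraints $0\le p'\le p$, I would additionally deform pairs (or longer chains) of the generic lines into sundials or zig-zags with singular points on $Q$, using the flat family of Lemma~\ref{lemma: zz} and semicontinuity. By Lemma~\ref{lem:residual sundial}, the residual of such a zig-zag with respect to $Q$ is the corresponding reduced zig-zag, so the inductive statement must be strengthened to allow intermediate schemes containing---besides disjoint lines and points---also reduced crosses and reduced zig-zags, as advertised in the introduction. The trace of such components on $Q$ contributes a controlled set of additional points which, combined with the generic points coming from the other lines and from the auxiliary points specialized onto $Q$, brings the trace system to its expected dimension; its $h^0$-vanishing then follows from Lemma~\ref{lem:system 2 on P1xP1} together with the standard fact that further generic points impose independent conditions.

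The main obstacle is the bookkeeping: for each critical triple $(m,d,r)$ one must choose whether $Q$ contains $M$, which lines to collapse into crosses or longer zig-zags, and how many $P_i$ to move onto $Q$, so that both terms of the Castelnuovo bound vanish while the residual still lies in the strengthened inductive class. The explicit bound $d_0(m)=3\binom{m+1}{3}$ appears as the smallest degree for which this orchestration is possible, precisely because it is the threshold of applicability of Lemma~\ref{lem:system 2 on P1xP1}, which governs the postulation of the two fat points that appear in the trace.
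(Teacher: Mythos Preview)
Your outline captures the paper's architecture: induction on $m$ with Hartshorne--Hirschowitz as the base, reduction to an $h^0$-vanishing by adding generic points, Castelnuovo's inequality against a smooth quadric $Q$, and the use of sundials/zig-zags to tune the point counts. Two organizational points differ from the paper. First, the paper does \emph{not} strengthen the induction hypothesis to admit crosses and zig-zags; instead, within a single step of the induction on $m$ it builds an explicit finite \emph{sequence} of specializations (Theorem~\ref{th:sequence}) that begins and ends with schemes consisting only of a fat line, ordinary lines, and points---the crosses and zig-zags are intermediate and are ``burned off'' inside the sequence. Second, the dominant move in the paper is $\delta=1$, i.e., the fat line is placed in a ruling of $Q$; the configuration you highlight with $M\not\subset Q$ and two fat trace points (where Lemma~\ref{lem:system 2 on P1xP1} enters) occurs only in one sub-step of the bijective case $\varepsilon=1$, not as a primary induction on $d$. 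Your identities $c(d,m)=c(d-2,m-1)+m(d+1)$ and $c(d,m)=c(d-2,m)+m(m+1)$ are correct but are not the content of Lemma~\ref{lem:formula for cnmd}. Finally, the bookkeeping you defer---choosing, for each residue class of $d$ and of $m$ mod $3$, the precise specialization $R(\delta,\ell,\ell_s,\ell_z,t,t_s,t_z)$ at every stage so that each trace system is nonspecial of nonpositive virtual dimension while the residual lands back in the inductive class---is in fact the substance of the proof, and the paper carries it out via an eight-case analysis; your sketch correctly anticipates that this is where the work lies, but does not supply it.
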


   As pointed out in the introduction, we will employ the general
   strategy of Hartshorne and Hirschowitz \cite[Theorem 1.1]{HarHir81}.
   Specifically, we will proceed inductively along a
   suitable sequence of subschemes $Z_0,Z_1,\dots$,
   for which we choose suitable specializations $Z_0',Z_1',\dots$.
   While we can start with a subscheme $Z_0$ consisting of
   general lines, a fat line and points,
   it is a major obstacle that
   it seems insufficient
   to use only these kinds of schemes during the whole induction process.
   Our idea is to instead allow intermediate schemes
   $Z=Z(m,r,s,q,z)$ consisting of
   one general line of multiplicity $m$, $r$ general lines,
   $s$ general crosses, $q$ general points and a reduced zig-zag of length $z$
   (along with
   particular specializations $Z'$ of $Z$, which will be introduced
   in Definition~\ref{def:specialization}).

   We now set up some notation that will be useful for the remainder of the paper.
   We denote by
   $$\call(k,\eps;m,r,s,q,z)=\call(d;Z)=H^0(\P^3,\calo_{\P^3}(d)\otimes\cali_Z)$$
   the linear system
   of polynomials in $\P^3$ of degree $d=3k+\eps$, with $\eps\in\{0,1,2\}$ vanishing along the subscheme $Z$.

   Similarly we will write
   $$\calm((a,b);p,p_d,p_m,m)=\calm((a,b);\Omega)=H^0(\P^1\times\P^1,\calo_{\P^1\times\P^1}(a,b)\otimes\cali_{\Omega})$$
   to indicate the linear system
   on $\P^1\times\P^1$ of polynomials of bidegree $(a,b)$ vanishing along the subscheme
   $\Omega=\Omega(p,p_d,p_m,m)$ consisting of $p$ general points, $p_d$ general double
   points and $p_m$ general points of multiplicity $m$. In our considerations $p_m$
   is either $0$ or $2$, depending on whether we specialize the fat line onto the
   quadric or not.

   Given $m\geq 1$ and $d\geq d_0(m)=3\binom{m+1}{3}$ there exist unique integers
   $r(d,m)\ge 0$ and $0\leq q(d,m)\leq d$ such that
   \begin{equation}\label{eq:sum}
      \chi_{\P^3}(d)=c(d,m)+r(d,m)(d+1)+q(d,m).
   \end{equation}
   So $\chi_{\P^3}(d)$ is the virtual number
   of conditions that one $m$-fold line,
   $r(d,m)$ generic ordinary lines, and $q(d,m)$ generic points impose.

\begin{remark}\label{rem:formulas r and q}
   Concretely, we have
   $$r(d,m)=\left\lfloor \frac{1}{d+1}\left(\binom{d+3}{3}-\frac16m(m+1)(3d+5-2m)\right)\right\rfloor$$
   and
   $$q(d,m)=\binom{d+3}{3}-\frac16m(m+1)(3d+5-2m)-(d+1)r(d,m).$$
   In particular,
   \begin{itemize}
   \item for $d=3k$
      $$r(d,m)=\frac32k^2+\frac52k+1-\binom{m+1}{2}\;\mbox{ and }\; q(d,m)=2\binom{m+1}{3},$$
   \item for $d=3k+1$
      $$r(d,m)=\frac32k^2+\frac72k+2-\binom{m+1}{2}\;\mbox{ and }\; q(d,m)=2\binom{m+1}{3},$$
   \item for $d=3k+2$
      $$r(d,m)=\frac32k^2+\frac92k+3-\binom{m+1}{2}\;\mbox{ and }\; q(d,m)=k+1+2\binom{m+1}{3}.$$
   \end{itemize}
\end{remark}

   The following theorem
   (to be proved in Subsection~\ref{sect:proof})
   implies the Main Theorem.
   \begin{theorem}\label{th:sequence}
      Let $d \ge d_0(m)=3\binom{m+1}{3}$ and let
      $Z = Z(m,r(m,d),0,q(m,d),0)$, or $Z=Z(m,r(m,d)+1,0,0,0)$.
      Let further be $Q$ some smooth quadric.
      Then there exists a sequence $Z=Z_0,  Z_1,\dots,Z_u$ of schemes
      $Z_i=Z(m_i, r_i ,s_i ,q_i ,z_i)$  together with specializations $Z'_i$
      such that the following hold for each $i= 0,\dots, u-1$
      \begin{enumerate}
      \item[\rm(1)] $Z_{i+1} = Res_Q(Z'_i) $
      \item[\rm(2)] $h^0(Q, \mathcal O_Q(d-2i) \otimes I_{Tr_Q(Z'_i)} ) = 0$
      \end{enumerate}
   and such that $Z_u$ satisfies the conditions
   \begin{itemize}
      \item[\rm(i)] $Z_u = Z(m_u, r(m_u, d-2u), 0, q(m_u, d- 2u), 0 )$, or \newline $Z_u =
      Z(m_u, r(m_u, d-2u)+1, 0,0, 0 )$
      \item[\rm(ii)] $d-2u \ge d_0(m_u)$
      \item[\rm(iii)] $m_u\in\left\{m-1,m-2,1,0\right\}$
   \end{itemize}
   \end{theorem}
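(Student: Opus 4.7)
The plan is to iterate a Castelnuovo-style step with respect to the fixed quadric $Q$. Given $Z_i=Z(m_i,r_i,s_i,q_i,z_i)$ at degree $d_i:=d-2i$, I would define the specialization $Z'_i$ by moving onto $Q$: the fat line $L$ of multiplicity $m_i$ (so that $L$ becomes a ruling of $Q$); a suitable number of the $r_i$ general lines, as rulings of either family of $Q$; the remaining general lines, sent to secants of $Q$ through the singular points of the crosses and zig-zag already present in $Z_i$, thereby producing sundials and extending the zig-zag to a non-reduced one in the sense of Definition~\ref{def:zz}; and a prescribed subset of the $q_i$ general points. Semicontinuity reduces the vanishing statement $\dim\call(d_i;Z_i)=0$ to the same statement for $Z'_i$, and the Castelnuovo inequality (Lemma~\ref{lem:Castelnuovo}) splits the latter into the residual problem (1) and the trace problem (2).

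For the residual I would combine two ingredients. First, the colon-ideal identity $(I_L^{m_i}:I_Q)=I_L^{m_i-1}$ (a direct local calculation once $L\subset Q$) shows that the fat line contributes to $Z_{i+1}$ as a fat line of multiplicity $m_i-1$. Second, Lemma~\ref{lem:residual sundial} turns the newly built sundials into reduced crosses and the newly extended non-reduced zig-zag into a reduced zig-zag. The general lines absorbed into $Q$ as rulings, and the general points placed on $Q$, vanish in the residual. Thus $Z_{i+1}=\Res_Q(Z'_i)$ is again of the form $Z(m_{i+1},r_{i+1},s_{i+1},q_{i+1},z_{i+1})$ with $m_{i+1}=m_i-1$, and the remaining parameters are forced by a length count equating the virtual dimension of $\call(d_{i+1};Z_{i+1})$ to the actual expected one. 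The procedure is iterated until both the crosses and the zig-zag are consumed and $m_u$ lands in $\{m-1,m-2,1,0\}$, which is exactly what the outer induction on $m$ requires; the inequality $d-2u\ge d_0(m_u)$ then follows from $d_0(m)-d_0(m_u)=3\binom{m+1}{3}-3\binom{m_u+1}{3}\geq 2u$, since at most $u=m$ steps are ever needed.

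The main obstacle is condition~(2), the nonspeciality of the trace system $\calm((d_i,d_i);\Omega)$ on $Q\cong\P^1\times\P^1$. The scheme $\Omega$ carries two points of multiplicity $m_i$ (the intersections $L\cap Q$), together with rulings from the lines absorbed into $Q$, double points at the sundial apices and zig-zag singularities, and a controlled number of simple points. By B\'ezout, each ruling is a fixed component of the trace system; peeling them off leaves a system on $\P^1\times\P^1$ of some reduced bidegree $(d_i-\alpha_i,d_i-\beta_i)$ passing through the two multiplicity-$m_i$ points, the double points and the simple points. Nonspeciality of this residual trace system is exactly the content of Lemmas~\ref{lem:system 1 on P1xP1} and~\ref{lem:system 2 on P1xP1}: the latter, applied with $k\geq\binom{m_i+1}{3}$, handles the two points of multiplicity $m_i$, and this is precisely where the bound $d\geq d_0(m)=3\binom{m+1}{3}$ is used; the former handles the double and simple points, provided the specialisation is arranged so as to avoid the exceptional cases $a\in\{0,2\}$ in its statement. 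Orchestrating the parameters $\alpha_i,\beta_i,s_i,z_i,q_i$ at every step so that both lemmas apply simultaneously, while also meeting the length count of the previous paragraph, is the combinatorial heart of the argument.
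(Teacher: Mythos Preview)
Your outline captures the overall architecture correctly --- specialize onto $Q$, apply Castelnuovo, control the trace with Lemmas~\ref{lem:system 1 on P1xP1} and~\ref{lem:system 2 on P1xP1}, and iterate --- and this is indeed what the paper does. But there is a concrete inconsistency in your trace step, and the ``combinatorial heart'' you defer is in fact the entire content of the proof.

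The inconsistency: you say you always move the fat line $L$ onto $Q$ as a ruling (so that $m_{i+1}=m_i-1$), and then you say the trace scheme $\Omega$ carries ``two points of multiplicity $m_i$ (the intersections $L\cap Q$)''. These are incompatible. If $L\subset Q$, then $L\cap Q=L$ contributes an $m_i$-fold ruling to the divisorial part $D$ of the trace, and after peeling it off there are \emph{no} multiplicity-$m_i$ points left in $\Omega$; the two fat points of multiplicity $m_i$ only appear when $L$ is \emph{not} specialized onto $Q$ (so $\delta=0$ in the notation of Definition~\ref{def:specialization} and Lemma~\ref{lem:reduction}). The paper genuinely needs both kinds of steps: for instance in the reduction $B(k,1,m)\to B(k-1,0,m-1)$ the first step has $\delta=1$ and the second has $\delta=0$, and it is precisely in this $\delta=0$ step that Lemma~\ref{lem:system 2 on P1xP1} is invoked. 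A uniform ``always $\delta=1$'' scheme cannot produce the trace systems you describe, and conversely your appeal to Lemma~\ref{lem:system 2 on P1xP1} is vacuous under your stated specialization.

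The deferred part: the paper's proof is not a single recipe but a table of eight cases (three bijective $B(k,\eps,m)$ and five injective $I(k,\eps,m)$ with $\eps=2$ split by $m\bmod 3$), each with its own explicit specialization parameters $R(\delta,\ell,\ell_s,\ell_z,t,t_s,t_z)$ and sequence length. In the worst case $I(k,2,m)$ the sequence has length roughly $m$, with a carefully chosen zig-zag length $t_{z_p}$ at each step designed to force the trace virtual dimension to zero. Your sentence ``the remaining parameters are forced by a length count'' is not true without further input: at each step there are several degrees of freedom (how many lines to absorb as rulings, how many to collide into sundials, how long a zig-zag to build), and choosing them so that the trace system simultaneously has nonpositive virtual dimension \emph{and} avoids the exceptional bidegrees of Lemma~\ref{lem:system 1 on P1xP1} \emph{and} leaves a residual of the required standard shape is exactly the work. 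Likewise your inequality $d_0(m)-d_0(m_u)\ge 2u$ is asserted, not proved, and in the long $I(k,2,m)$ sequences one must actually check that $d-2u\ge d_0(m_u)$ for the specific $u$ and $m_u$ that arise.
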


\begin{proof}[Proof of Theorem~\ref{thm:main rank}]
   We proceed by induction on $m$.
   The base case $m=1$ has been proved for all $d\geq 0=d_0(1)$ in \cite{HarHir81}
   and the base case $m=2$ by Aladpoosh \cite{Ala16} for all $d\geq 2=d_0(2)$.

   Let now $m\ge 3$.
   For $d\ge d_0(m)$ it suffices to prove the bijectivity of the restriction map in the case of
   schemes $Z=Z(m,r(d,m),0,q(d,m),0)$, and the injectivity in the case of
   schemes $Z=Z(m,r(m,d)+1,0,0,0)$.
   This amounts in either case to proving the identity
   $$
      h^0(\call(d;Z))=0
      \,.
   $$
   Now, Theorem \ref{th:sequence} together with Castelnuovo's inequality yields
   \begin{eqnarray*}
      h^0(\call(d;Z)) &\le& h^0(\call(d-2u; Z_u)) + \sum_{i=1}^{u-1} h^0(Q, \mathcal O_Q(d-2i) \otimes I_{Tr_Q(Z'_i)} )\\
      &=& h^0(\call(d-2u; Z_u))
      \,,
   \end{eqnarray*}
   but the latter must be zero since $Z_n$ satisfies the induction hypothesis, again by Theorem \ref{th:sequence}.
\end{proof}

%*****************************************************************************

\subsection{Proof of Theorem \ref{th:sequence}}\label{sect:proof}

   In order to prove Theorem \ref{th:sequence}, we will need the next lemma describing which
   schemes result from certain specializations.

   \begin{definition}\label{def:specialization}
   Let $Q$ be a smooth quadric in $\P^3$. We denote by
   $R(\delta,\ell,\ell_s,\ell_z,t,t_s,t_z)$ the specialization $Z'$ of   $Z=Z(m,r,s,q,z)$ given by
   assuming  the following
   lines to be disjoint lines belonging to the same ruling of $Q$:
   \begin{itemize}
      \item $\delta$ $m$-fold lines (here $\delta$ will be either $0$ or $1$);
      \item $\ell$ ordinary lines;
      \item $\ell_s$ lines from $\ell_s$ crosses (one line from each cross);
      \item $\ell_z=\lfloor \frac{z}{2}\rfloor$ lines from the reduced zig-zag of length $z$,
   \end{itemize}
    and assuming furthermore
    \begin{itemize}
    \item $t$ among the $q$ points to be general points on $Q$,
    \item $2t_s$ of the $r$ lines to form $t_s$ sundials whose intersection with
    $Q$ is a zero-dimensional scheme containing the singular points of the sundials,
    \item  $t_z+1$ of the lines to form one zig-zag whose zero-dimensional intersection
    with $Q$ contains all $t_z$ singular points.
    \end{itemize}
\end{definition}

\begin{lemma}\label{lem:reduction}
   Let $Z'$ be the specialization $R(\delta,\ell,\ell_s,\ell_z,t,t_s,t_z)$ of the scheme
   $Z=Z(m,r,s,q,z)$.
   Then
      $$\Res_Q(Z')=Z(m-\delta,r-\ell+\ell_s+(z-\ell_z)-2t_s-(t_z+1),s-\ell_s+t_s,q-t,t_z+1)$$
   and
   $$\Tr_Q(Z')=D+\Omega(2r-2\ell-2\ell_z-3\ell_s-2t_s-2t_z+t+4s+z+\gamma,t_s+t_z,2-2\delta,m),$$
   where
   $D$ is a divisor on $Q$ consisting of $\delta$ lines,
   where $\delta\in\{0,1\}$,
   of multiplicity $m$
   and $\ell+\ell_s+\ell_z$
   reduced lines, all contained in the same ruling on $\Q$.
   Here
   $\gamma=\left\{\begin{array}{cc}
      0,&\mbox{if }\ell_z=0,\\
      1,&\mbox{if }\ell_z>0
      \end{array}\right.$.
\end{lemma}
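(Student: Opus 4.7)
The plan is to decompose $Z'$ into its constituent subschemes---the fat line, the $r$ ordinary lines (some of which become sundials or lie in the new zig-zag), the $s$ crosses, the reduced zig-zag of length $z$, and the $q$ points---and to compute the contribution of each of these to $\Res_Q(Z')$ and $\Tr_Q(Z')$ separately, then assemble the results. This assembly is justified by the fact that the colon ideal distributes over intersections, $(I_1\cap I_2):I_Q=(I_1:I_Q)\cap(I_2:I_Q)$, so the residual of a union is the union of residuals, and that the trace is likewise compatible with unions of subschemes.

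Most of the per-component contributions are routine local computations. A reduced line on $Q$ enters $D$ and residuates to the empty scheme; an $m$-fold line on $Q$ enters $D$ with multiplicity $m$ and residuates to a line of multiplicity $m-1$, since $I_L^m:I_Q=I_L^{m-1}$ whenever $L\subset Q$. A line off $Q$ residuates to itself and contributes its two intersections with $Q$ (simple points, or points of multiplicity $m$ in the fat-line case) to $\Omega$. Points behave obviously. By Lemma~\ref{lem:residual sundial}, the sundials and the new non-reduced zig-zag---whose singular points all lie on $Q$ by construction---residuate to their reduced counterparts (a reduced cross per sundial, and a reduced zig-zag of length $t_z+1$), while a local computation at each node (with the embedded tangent vector transverse to $Q$) shows that the trace picks up a double point there, with the non-node intersections of each line of the sundial/new zig-zag contributing simple points. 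A cross of $Z$ with one line placed on $Q$ residuates to its off-$Q$ line alone and traces to the on-$Q$ line (into $D$) together with one free simple point, since the node is absorbed into $D$; an unspecialized cross contributes four simple points.

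The main subtlety lies in the accounting for the original reduced zig-zag of length $z$. Because two lines of the same ruling of $Q$ are disjoint, the $\ell_z=\lfloor z/2\rfloor$ lines placed on $Q$ must sit at pairwise non-adjacent positions in the zig-zag; this placement exists and is essentially unique, and with it every one of the $z-1$ singular points of the reduced zig-zag is adjacent to some on-$Q$ line and is therefore absorbed into $D$. A parity-based case analysis then shows that the number of \emph{free} (unabsorbed) intersections of off-$Q$ zig-zag lines with $Q$ equals $1$ when $z$ is even and $2$ when $z$ is odd, matching $z+\gamma-2\ell_z$ in both cases (with $\gamma=1$). Summing this together with the contributions $2(r-\ell-2t_s-(t_z+1))$ from the free ordinary lines, $2t_s$ from the sundial non-node intersections, $2$ from the endpoints of the new zig-zag, $4(s-\ell_s)+\ell_s$ from the crosses, and $t$ from the specialized points reproduces exactly the asserted simple-point count in $\Omega$; the divisorial part $D$ and the counts of double points and $m$-fold points drop out of the component-wise analysis directly.
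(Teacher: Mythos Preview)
The paper states Lemma~\ref{lem:reduction} without proof, moving directly to the proof of Theorem~\ref{th:sequence}; the authors evidently regard it as a bookkeeping exercise following straight from Definition~\ref{def:specialization} and Lemma~\ref{lem:residual sundial}. Your argument supplies exactly this bookkeeping and is correct: the component-by-component residual/trace computation is sound, the appeal to Lemma~\ref{lem:residual sundial} for the sundials and the new zig-zag is appropriate, and your parity analysis of the old reduced zig-zag correctly yields $z-2\ell_z+\gamma$ free simple points.

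One minor inaccuracy worth cleaning up: the placement of the $\ell_z=\lfloor z/2\rfloor$ on-$Q$ lines at pairwise non-adjacent positions is \emph{not} essentially unique (for $z$ even there are two such maximal placements, and for $z$ odd there are several placements of size $(z-1)/2$). What matters, and what you should state instead, is that there \emph{exists} a placement in which every one of the $z-1$ nodes is incident to an on-$Q$ line (e.g.\ the even-indexed lines $L_2,L_4,\dots$), and that Definition~\ref{def:specialization} should be read as choosing such a placement. With a bad placement (say $\{1,3,\dots,z-2\}$ for $z$ odd) the terminal node $(z-1,z)$ is uncovered and the free-point count comes out to $3$ rather than $2$, so the choice genuinely matters. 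This does not affect the validity of your argument, only the phrasing.
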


   Now we turn to the proof of Theorem \ref{th:sequence}.

   The particular sequence of subschemes differs according to the divisibility of $d$ by $3$.
   In order to simplify notation we denote the relevant linear series by
   \begin{eqnarray*}
   B(k,\eps,m) &=& \call(k,\eps;m,r(3k+\eps,m),0,q(3k + \eps,m),0)     \\
   I(k,\eps,m) &=&   \call(k,\eps;m,r(3k+\eps,m)+1,0,0,0)
   \end{eqnarray*}
   The following table shows for each case the length and the final element of the sequence that we will
   construct in the sequel.

\begin{center}
\def\arraystretch{1.5}
\begin{tabular}{ccc}
   \hline
   For & a sequence of length   & yields\\
   \hline
   $B(k,0,m)$ & $1$ & $B(k-1,1,m-1)$\\
   $B(k,1,m)$ & $2$ & $B(k-1,0,m-1)$\\
   $B(k,2,m)$ & $1$ & $B(k,0,m-1)$\\
   \hline
   $I(k,0,m)$ & $2$ & $I(k-2,2,m-2)$\\
   $I(k,1,m)$ & $1$ & $I(k-1,2,m-1)$\\
   $I(k,2,3\ell)$ & $3\ell-1$ & $B(k-2\ell+1,1,1)$\\
   $I(k,2,3\ell+1)$ & $3\ell+1$ & $B(k-2\ell,0,0)$\\
   $I(k,2,3\ell+2)$ & $3\ell+1$ & $B(k-2\ell,0,1)$\\
   \hline
\end{tabular}
\end{center}

\subsubsection{The bijective cases}
   With $d=3k+\eps$, the initial system in every case here is
   $$\call(k,\eps;m,r(3k+\eps,m),0,q(3k+\eps,m),0).$$

%***********************************Case A - bijectivity

    \parag{Case $B(k,0,m)$}
    We only specialize once, and we pick
    $$Z'_0=R(1,2k+1-m,0,0,m(m-1),0,0).$$
    By Lemma \ref{lem:reduction}, we obtain the trace system
    $$
       H^0(\mathcal O_Q(d) \otimes I_{Tr_Q(Z')})=\calm((d,d-(2k+1));2r-2(2k+1-m)+m(m-1),0,0,m)
     $$
     which is of virtual dimension
     $$
        (3k+1)k-(2r(3k,m)-2(2k+1-m)+m(m-1))=(3k+1)k-(3k+1)k=0.
     $$
     By Lemma \ref{lem:system 1 on P1xP1}, this system is non-special, so its actual dimension is also zero.
     This shows that condition~(2) in Theorem~\ref{th:sequence} is fulfilled.
    The residual system is
    \begin{eqnarray*}
    \call_1&=&\call(k-1,1;m-1,r(3(k-1)+1,m-1),0,q(3(k-1)+1,m-1),0)\\
    &=& B(k-1,1,m-1)
    \end{eqnarray*}
    by Lemma \ref{lem:reduction}.
    Note that the subscheme $Z_1:=\Res_Q(Z_0')$ then satisfies conditions~(i)--(iii)
    of Theorem~\ref{th:sequence}.

%*********************************Case B - bijectivity

    \parag{Case $B(k,1,m)$}
    In this case we use two specializations. First set
    $$Z_0'=R(1,2k+1-m,0,0,m(m-1),2k,0),$$
    resulting in
    $$\call_1=\call(k-1,2;m-1,\frac32k^2-\frac52k+1-\frac12m^2+\frac12m,2k,\frac13m^3-m^2+\frac23m,0)$$
    and
    $$\calm_1=\calm(k,3k+1;3k^2-k+2,2k,0,m),$$
    which system is zero-dimensional.
    Then we set
    $$Z_1'=R(0,1,2k,0,0,0,0)$$
    and obtain the residual system
    $$\call_2=\call(k-1,0;m-1,r(3(k-1),m-1),0,q(3(k-1),m-1),0)= B(k-1,0,m-1),$$
    and the trace system
    $$\calm_2=\calm(k-2,3k-1;3k^2-3k-m^2+m,0,2,m-1),$$
%    with $h^0(\calm_2)=\mbox{vdim}(\calm_2)=0$.
    with $h^0(\calm_2)=0$.

%*********************************Case C - bijectivity

    \parag{Case $B(k,2,m)$}
    In this case we use the specialization
    $$Z_0'=R(1,2k+2-m,0,0,k+1+m(m-1),0,0).$$
    We obtain
    $$\call_1=\call(k,0;m-1,r(3k,m-1),0,q(3k,m-1),0)=B(k,0,m-1)$$
    and
    $$\calm_1=\calm(k,3k+2;3k^2+6k+3,0,0,m)$$
   which is of dimension 0.

%************************* end proof of bijectivity

\subsubsection{The injective cases}

   With $d=3k+\eps$, the initial state in every case now is
   $$\call(k,\eps;m,r(3k+\eps,m)+1,0,0,0).$$

%**************************Case A - injectivity

   \parag{Case $I(k,0,m)$}
   We have $\call_0=\call(k,0,m,r(3k,m)+1,0,0,0)$ so that
   $$\vdim(\call_0)=-3k-1+\frac13m(m-1)(m+1)<0$$
   for $d = 3k \ge d_0(m) = 3\binom{m+1}{3}$.

   We apply the specializations
   \begin{eqnarray*}
      Z_0'&=&R(1,2k+1-m,0,0,0,0,m(m-1)-2)\\
      Z'_1&=&R(1,2k+1-m-(\frac12 m(m-1)-1),0,\frac12 m(m-1)-1,0,0,0)
   \end{eqnarray*}
   By Lemma \ref{lem:reduction}  the trace systems are
   \begin{eqnarray*}
      \calm_1 &=& \calm((3k, k-1); 2(r(3k,m)+1-(2k+1-m)),m(m-1)-2,0,m) \\
      \calm_2 &=&\calm((3k-2, k-2); 3k^2-3k+2-2m^2+4m, 0, 0, m-1)
   \end{eqnarray*}
   It is easy to see that both of these have non-positive virtual dimensions for $d\ge d_0(m)$,
   and thus actual dimension zero.

   Note also that we have the identity
   $$r(3k,m)+1-(2k+1-m)-(2k+1-m)=r(3(k-2)+2,m-2)+1.$$
   The final residual system thus is
   $$\call_2=\call(k-2,2,m-2,r(3(k-2)+2,m-2)+1,0,0,0) = I(k-2,2,m-2).$$

%****************************Case B - injectivity

   \parag{Case $I(k,1,m)$}
   Here $\call_0=\call(k,1,m,r(3k+1,m)+1,0,0,0)$, which has virtual dimension
   $$\vdim(\call_0)=-3k-2+\frac13m(m-1)(m+1)<0.$$
   We apply the specialization
   $$Z'_0=R(1,2k+2-m,0,0,0,0,0)$$
   which by the identity
   $$
      r(3k+1,m)+1-(2k+2-m)=r(3(k-1)+2,m-1)+1
   $$
   yields
   $$\call_1=\call(k-1,2,m-1,r(3(k-1)+2,m-1)+1,0,0,0) = I(k-1,2,m-1)$$
   as the residual system and
   $$\calm((k-1, 3k+1); 3k^2+3k+2-m^2+m, 0, 0, m)$$
   as the trace system. Its virtual dimension is
   $$\vdim(\calm_1)=-k-2+m^2-m<0,$$
   so $h^0(\calm_1)=0$.

%****************************Case C - injectivity

   \parag{Case $I(k,2,m)$}
   This is the most difficult case -- it requires the use of zig-zags, and
   the specializations and their number depend on the multiplicity $m$ of the fat line
   as well as
   on the divisibility of $m$ by $3$. In this step, additionally, the reduction
   goes to one of the bijectivity cases.

   $\call_0=\call(k,2,m,r(3k+2,m)+1,0,0,0)$ and
   $$\vdim(\call_0)=-2k-2+\frac13m(m-1)(m+1)<0.$$
   In each case the first specialization will be
   $$Z_0'=R(1,2k+2-m,0,0,0,0,k+m(m-1)+1).$$
   Define further for $p=2,\dots,m-1$
   $$Z'_{p-1}=R(1,2k+2-m-\left\lfloor{\frac{p-1}{3}}\right\rfloor-\left\lfloor{\frac{t_{z_{p-1}}+1}{2}}\right\rfloor,0,\left\lfloor{\frac{t_{z_{p-1}}+1}{2}}\right\rfloor,0,0,t_{z_p}),$$
   where
   \[
   t_{z_p}=
   \begin{cases}
   k+pm(m-p)+\frac13p(p-1)(p+1)-2p+1 &\mbox{if } p \equiv 1,2\\
   pm(p-m)+\frac13p(p-1)(p+1)-2p+2\frac{p}{3} & \mbox{if } p \equiv 0
   \end{cases} \pmod{3}.
   \]

   Note that $t_{z_p}$ is chosen in a way that guarantees the corresponding
   trace systems to have virtual dimension zero, and thus actual dimension zero.

   \parag{Subcase $I(k,2,m=3\ell)$}
   In this case we consider the sequence $Z_0, Z_1,\dots, Z_{m-2}$ defined above
   and use as a final step $Z_{m-1}=\mbox{Res}_Q(Z')$ for
   $$Z'=R(1,2k+2-m-(\ell-1)-\left\lfloor{\frac{t_{z_{m-2}}+1}{2}}\right\rfloor+1,0,\left\lfloor{\frac{t_{z_{m-2}}+1}{2}}\right\rfloor,0,0,0)$$
   The final residual system is
   \begin{eqnarray*}
   \call_{m-1}&=&\call(k-2\ell+1,1,1,r(3k+2,3\ell)+\frac{21}{2}\ell-\frac{23}{2}\ell+3-6k\ell+2k,0,0,0).
   \end{eqnarray*}
   Since
   $$r(3k+2,3\ell)+\frac{21}{2}\ell-\frac{23}{2}\ell+3-6k\ell+2k=r(3(k-2\ell+1)+1,1)$$
   and $q(3(k-2\ell+1)+1,1)=0$ we have
   \begin{eqnarray*}
   \call_{m-1}&=&B(k-2\ell+1,1,1).
   \end{eqnarray*}
   The final trace system is
   $$\calm_{m-1}=\calm(k-2\ell, 3k-6\ell+6, 2r(3k+2,3\ell)-12k\ell-16\ell+21\ell^2+3k+3-9\ell^3, 0, 0, 2),$$
   which has  virtual dimension $-2k-2+\frac13m(m-1)(m+1)<0$.

   \parag{Subcase $I(k,2,m=3\ell+1)$}
   Consider the sequence $Z_0, Z_1,\dots, Z_{m-1}$ defined above
   and use as a final step $Z_m=\mbox{Res}_Q(Z')$ for
   $$Z'=R(1,2k+2-m-\ell-\left\lfloor{\frac{t_{z_{m-1}}+1}{2}}\right\rfloor+1,0,\left\lfloor{\frac{t_{z_{m-1}}+1}{2}}\right\rfloor,0,0,0)$$
   The final residual system is
   $$\call_m=\call(k-2\ell,0,0,r(3k+2,3\ell+1)+\frac{21}{2}\ell^2-\frac12\ell-1-6k\ell-2k,0,0,0)$$
   which thanks to the identities
   $$r(3k+2,3\ell+1)+\frac{21}{2}\ell^2-\frac12\ell-1-6k\ell-2k=r(3(k-2\ell),0)$$
   and $q(3(k-\ell),0)=0$
   equals the system $B(k-2\ell,0,0)$, as required.
   The final trace system is
   $$\calm_m=\calm(k-2\ell-1, 3k-6\ell+2, 2r(3k+2,3\ell+1)-12k\ell-4k-9\ell^3+12\ell^2+\ell-2, 0, 0, 1)$$
   Also in this case we have
   $$\vdim(\calm_m)=-2k-2+\frac13m(m-1)(m+1)<0.$$

   \parag{Subcase $I(k,2,m=3\ell+2)$}
   Use as in the first subcase the sequence $Z_0, Z_1,\dots, Z_{m-2}$ defined above
   and use as a final step $Z_{m-1}=\mbox{Res}_Q(Z')$ for
   $$Z'=(1,2k+2-m-\ell-\left\lfloor{\frac{t_{z_{m-2}}+1}{2}}\right\rfloor+1,0,\left\lfloor{\frac{t_{z_{m-2}}+1}{2}}\right\rfloor,0,0,0)$$
   The final residual system is
   $$\call_{m-1}=\call(k-2\ell,0,1,r(3k+2,3\ell+2)+\frac{21}{2}\ell^2+\frac52\ell-6k\ell-2k,0,0,0)$$
   with
   $$r(3k+2,3\ell+2)+\frac{21}{2}\ell^2+\frac52\ell-6k\ell-2k=r(3(k-2\ell),1)$$
   and $q(3(k-2\ell),1)=0$.
   The final trace system is
   $$\calm_{m-1}=\calm(k-2\ell-1, 3k-6\ell+2, 2r(3k+2,3\ell+2)-12k\ell-4k-9\ell^3+3\ell^2-2\ell, 0, 0, 2).$$
   Its dimension is zero since
   $$\vdim(\calm_{m-1})=-2k-2+\frac13m(m-1)(m+1)<0$$
    \endproof

%**************************************
% P^n

%*****************************************************************************

\section{Final remarks}

   We have developed a software to handle calculations necessary here.
   The software proved indispensable in order to manipulate sets of data
   and to discover general patterns leading to suitable specializations.
   Using this software we were not able to find any systems in the range
   $d<d_0(m)$ for which the maximal rank statement in Theorem \ref{thm:main rank}
   would fail.
   We therefore expect that the statement holds in these cases as well:

\begin{conjecture}[Maximal Rank Conjecture]\label{conj:max rank}
   The restriction maps in Theorem~\ref{thm:main rank}
   have maximal rank for all $d\geq 1$.
\end{conjecture}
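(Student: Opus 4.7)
The gap between Theorem~\ref{thm:main rank} and Conjecture~\ref{conj:max rank} consists, for each fixed $m$, of the finite range $1\le d<d_0(m)=3\binom{m+1}{3}$. The plan is to close this gap by extending the inductive machinery of Section~4, complemented by a direct computer-algebra verification of a short list of sporadic base cases. Two regimes must be distinguished: the \emph{very low} range $d<m$ and the \emph{intermediate} range $m\le d\le d_0(m)-1$.

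In the very low range $d<m$ one has $[I(L)^m]_d=0$, so a form of degree $d$ vanishes on the $m$-fold line if and only if it vanishes identically; the bipolynomial condition then collapses to $\HF_X(d)=\binom{d+3}{3}$, and independent conditions reduce to the corresponding statement for the reduced lines alone, which is the theorem of Hartshorne and Hirschowitz. The boundary value $d=m-1$ is likewise essentially immediate once one checks that $c(m-1,m)=\binom{m+2}{3}$. So the genuine new content of the conjecture lies in the intermediate range.

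For this intermediate range the plan is to reuse the reduction pattern encoded in the table of Subsection~\ref{sect:proof}, which sends $(m,d)$ to $(m-1,d-2)$ or $(m-2,d-2)$ and eventually lands at $m\in\{0,1\}$ where Hartshorne--Hirschowitz applies. The mechanism requires that (a) the trace systems on a quadric $Q\cong\P^1\times\P^1$ be non-special at every step, and (b) the numerical relations of Remark~\ref{rem:formulas r and q} make sense, in particular $r(d,m)\ge 0$. When $d<d_0(m)$ both conditions can fail, and the repair amounts to allowing more elaborate intermediate schemes $Z(m',r,s,q,z)$ in which the fat line is gradually broken up: the missing or excess conditions should be absorbed by introducing several disjoint fat lines of smaller multiplicity, additional crosses, and longer zig-zags, with the specialization pattern of Definition~\ref{def:specialization} modified accordingly. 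The resulting combinatorial bookkeeping is explicit but heavy.

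The main obstacle is~(a): to guarantee non-speciality of the trace systems one needs a strengthened version of Lemma~\ref{lem:system 2 on P1xP1} in the thin regime where $a<k-1$ or $b<3k$. Lenarcik's theorem covers only multiplicities up to~$3$, so the critical step would be a new classification of special linear systems on $\P^1\times\P^1$ with two general points of multiplicity $m$, accompanied by the simple and double points arising from our specializations. I would attack this by induction on $m$ via the $(1,1)$-curve degeneration already used in the proof of Lemma~\ref{lem:system 2 on P1xP1}, enlarging the inductive set-up to cover the missing bidegrees. The finitely many residual cases not captured by such a classification can then be settled by the software mentioned in Section~5; since no counterexamples were detected there, one expects the list of genuinely hard cases to be short and concentrated at small $m$.
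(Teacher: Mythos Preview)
The statement you are addressing is presented in the paper as an open \emph{conjecture}, not a theorem: Section~5 offers no proof, only computational evidence from software experiments in the range $d<d_0(m)$. There is therefore no paper proof to compare your proposal against.

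Your submission is, correspondingly, a proof \emph{strategy} rather than a proof, and you are candid about this: you identify the main obstacle (non-speciality of trace systems on $\P^1\times\P^1$ in bidegrees outside the range covered by Lemma~\ref{lem:system 2 on P1xP1}), you sketch how you would attack it via the $(1,1)$-curve degeneration, and you defer residual cases to computer verification. This is a coherent and natural plan, and it is aligned with the paper's own methods; but the key technical ingredient~--- the strengthened non-speciality result on $\P^1\times\P^1$ with two general $m$-fold points in the thin bidegree regime~--- remains entirely hypothetical. The authors presumably left the conjecture open precisely because this ingredient, together with the intermediate-range combinatorics, is genuinely hard. One minor correction: in the very low range $d<m$ your argument is more elaborate than needed. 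Since $[I(L)^m]_d=0$ forces $[I_X]_d=0$ for the whole scheme $X$, the restriction map is already injective and hence of maximal rank; there is no need to invoke Hartshorne--Hirschowitz for the reduced lines.
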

   We hope that with some modifications, the software mentioned above might prove
   useful in similar situations, in particular might help to advance
   towards the proof of Aladpoosh's Conjecture. We also expect that our results
   can be generalized to projective spaces of arbitrary dimension. This
   is a subject of our current research.

\paragraph*{Acknowledgement.}
   This project has been started during the mini-workshop ''Arrangements of Subvarieties, and their Applications''
   held in Mathematischem Forschungsinstitut Oberwolfach February 29 - March 5, 2016. We are grateful to MFO
   for providing perfect working conditions. Part of the paper has been written up during the visit
   of the last two authors at KTH. We thank KTH for providing financial support and stimulating
   working atmosphere. The final reductions have been discovered at the Karma coffee place in Krak\'ow.
   It is a pleasure to acknowledge support and understanding of the Karma team.

   Research of DS was supported by DFG research fellowship SCHM 3223/1-1.
   Research of TS and JS was partially supported by National Science Centre, Poland, grant
   2014/15/B/ST1/02197.

%*****************************************************************************

%***************************************************************************** % Addresses

\bigskip \small

\bigskip

   Thomas Bauer,
   Fach\-be\-reich Ma\-the\-ma\-tik und In\-for\-ma\-tik,
   Philipps-Uni\-ver\-si\-t\"at Mar\-burg,
   Hans-Meer\-wein-Stra{\ss}e,
   D-35032~Mar\-burg, Germany

\nopagebreak
   \textit{E-mail address:} \texttt{tbauer@mathematik.uni-marburg.de}

\bigskip

   Sandra Di Rocco,
   Department of Mathematics, KTH, 100 44 Stockholm, Sweden.

\nopagebreak
   \textit{E-mail address:} \texttt{dirocco@math.kth.se}

\bigskip

\nopagebreak
 David Schmitz,
   Fach\-be\-reich Ma\-the\-ma\-tik und In\-for\-ma\-tik,
   Philipps-Uni\-ver\-si\-t\"at Mar\-burg,
   Hans-Meer\-wein-Stra{\ss}e,
   D-35032~Mar\-burg, Germany

\nopagebreak
   \textit{E-mail address:} \texttt{schmitzd@mathematik.uni-marburg.de}

\bigskip
   Tomasz Szemberg,
   Department of Mathematics, Pedagogical University of Cracow,
   Podchor\c a\.zych 2,
   PL-30-084 Krak\'ow, Poland

\nopagebreak
   \textit{E-mail address:} \texttt{tomasz.szemberg@gmail.com}

\bigskip
   Justyna Szpond,
   Department of Mathematics, Pedagogical University of Cracow,
   Podchor\c a\.zych 2,
   PL-30-084 Krak\'ow, Poland

\nopagebreak
   \textit{E-mail address:} \texttt{szpond@up.krakow.pl}

%*****************************************************************************

\end{document}